\documentclass[11pt]{article}

\usepackage{amsmath,amsthm,amssymb}
\usepackage{pstricks,pst-grad,pst-node,pst-tree}

\theoremstyle{plain}
\newtheorem{theorem}{Theorem}

\newtheorem{corollary}[theorem]{Corollary}

\title{Threshold Digraphs\thanks{Official contribution of the National
Institute of Standards and Technology; not subject to copyright in the
United States.}}

\author{Brian~Cloteaux\footnotemark[2] \and
        M.~Drew~LaMar\footnotemark[3] \and
	Elizabeth~Moseman\footnotemark[2]\ \footnotemark[4] \and
	James~Shook\footnotemark[2]
	}

\date {}
\begin{document}

\maketitle

\renewcommand{\thefootnote}{\fnsymbol{footnote}}
\footnotetext[2]{
National Institute of Standards and Technology,
Applied and Computational Mathematics Division,
Gaithersburg, MD
\texttt{\{brian.cloteaux, elizabeth.moseman, james.shook\}@nist.gov} }
\footnotetext[3]{
The College of William and Mary, Department of Biology,
Williamsburg, VA, \texttt{mdlama@wm.edu} }
\footnotetext[4]{
Corresponding author. }

\renewcommand{\thefootnote}{\arabic{footnote}}

\begin{abstract}
  A digraph whose degree sequence has a unique vertex labeled
  realization is called threshold.  In this paper we present several
  characterizations of threshold digraphs and their degree sequences,
  and show these characterizations to be equivalent. One of the
  characterizations is new, and allows for a shorter proof of the
  equivalence of the two known characterizations as well as proving
  the final characterization which appears without proof in the
  literature.  Using this result, we obtain a new, short proof of the
  Fulkerson-Chen theorem on degree sequences of general digraphs.
\end{abstract}

\section{Introduction}

What follows is a brief introduction to the notation used in the
paper. For notation not otherwise defined, see
Diestel~\cite{Diestel2010}.  We let $G= (V,E)$ be a digraph where $E$
is a set of ordered pairs called arcs.  If $(v,w)\in E$, then we say
$w$ is an out-neighbor of $v$ and $v$ is an in-neighbor of $w$. We
notate the out-degree of a vertex $v\in V$ by $d^+_G(v)$ and the
in-degree as $d^-_G(v)$, suppressing the subscript when the underlying
digraph is apparent from context.

Given a sequence $\alpha =
\big((\alpha_1^+,\alpha_1^-),\ldots,(\alpha_n^+,\alpha_n^-)\big)$ of
integer pairs we say that $\alpha$ is {\bf digraphical} if there is a
digraph $G = (V, E)$ with $V = \{ v_1, \ldots, v_n\}$ and $d^+(v_i) =
\alpha_i^+$, $d^-(v_i) = \alpha_i^-$. We call such $G$ a
\textbf{realization} of $\alpha$. An integer pair sequence $\alpha$ is
in {\bf positive lexicographical order} if $\alpha_i^+ \ge
\alpha_{i+1}^+$ with $\alpha_i^- \ge \alpha_{i+1}^-$ when $\alpha_i^+
= \alpha_{i+1}^+$.

We are interested in the degree sequences that have unique
vertex labeled realizations and the digraphs that realize them. As seen in~\cite{Mahadev1995}, the
undirected degree sequences with unique realizations are well studied.
Theorem~\ref{thm:td} in Section~\ref{S: thresholdChar} presents
several characterizations of this type of degree sequence and its
realization. We then show these characterizations to be
equivalent. One of the characterizations is new, and allows for a
shorter proof of the equivalence of the two known characterizations as
well as proving the final characterization which appears without proof
in the literature.  In Section~\ref{S: DigraphRealiz}, we use
Theorem~\ref{thm:td} to obtain a new short proof of the Fulkerson-Chen
theorem on degree sequences of general digraphs.  We end by presenting
some applications in Section~\ref{S: Applications}.

\begin{figure}
\begin{centering}
\begin{pspicture}(-3,-0.5)(7,2.5)
\psset{radius=0.1,arrowsize=1.5pt 5,arcangle=-15}
\Cnode*(0,0){ll}\nput*{-135}{ll}{$w$}
\Cnode*(0,2){ul}\nput*{135}{ul}{$y$}
\Cnode*(2,0){lr}\nput*{-45}{lr}{$x$}
\Cnode*(2,2){ur}\nput*{45}{ur}{$z$}
\ncline{->}{ll}{lr}
\ncline{->}{ul}{ur}
\ncline[linestyle=dashed]{->}{ll}{ur}
\ncline[linestyle=dashed]{->}{ul}{lr}

\Cnode*(5,0){tl}\nput*{-135}{tl}{$x$}
\Cnode*(7,0){tr}\nput*{-45}{tr}{$y$}
\Cnode*(6,1.7){tt}\nput*{90}{tt}{$z$}
\ncarc{->}{tl}{tr}
\ncarc{->}{tr}{tt}
\ncarc{->}{tt}{tl}
\ncarc[linestyle=dashed]{->}{tr}{tl}
\ncarc[linestyle=dashed]{->}{tl}{tt}
\ncarc[linestyle=dashed]{->}{tt}{tr}
\end{pspicture}
\end{centering}
\caption{A 2-switch (left) and an induced directed 3-cycle
  (right). Solid arcs must appear in the digraph and dashed arcs must
  not appear in the digraph. If an arc is not listed, then it may or may not
  be present.}\label{fig:forbidden}
\end{figure}
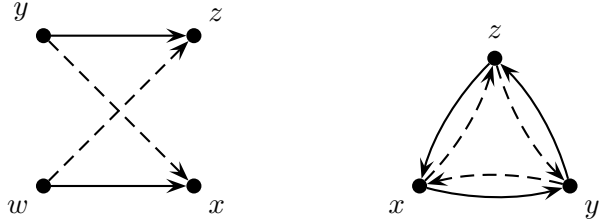
\section{Threshold Digraph Characterization}\label{S: thresholdChar}

In the existing literature~\cite{Rao1996}, the characterization of the unique
realization of a degree sequence is in terms of forbidden
configurations. The two forbidden configurations are the 2-switch and
the induced directed 3-cycle.
A {\bf 2-switch} is a set of four vertices $w, x, y, z$ so that
$(w,x)$ and $(y,z)$ are arcs of $G$ and $(w,z)$, $(y,x)$ are not. An
{\bf induced directed 3-cycle} is a set of three vertices $x,y,z$ so
that $(x,y),(y,z), (z,x)$ are arcs but there are no other arcs among
the vertices. Replacement of the arcs in these configurations with the
arcs that are not present yields another digraph with the same
degrees, both in and out, so any degree sequence of a digraph with
these configurations has multiple realizations. These configurations
are pictured in Figure~\ref{fig:forbidden}.

Our main theorem shortens the existing proofs by showing the
equivalence of our characterization
(Theorem~\ref{thm:td}.\ref{it:adj}) to known characterizations.

\begin{theorem}\label{thm:td} Let $G$ be a digraph and $A= [a_{ij}]$
an adjacency matrix of $G$. Define $\alpha_i^+ = \sum_{j=1}^n a_{ij}$
and $\alpha_j^-= \sum_{i=1}^n a_{ij}$. Suppose that the vertices
$v_1,\ldots, v_n$ of $G$ are ordered so that $d^+(v_i) = \alpha_i^+$,
$d^-(v_i) = \alpha_i^-$ and the degree sequence $\alpha =
\big((\alpha_1^+,\alpha_1^-),\ldots,(\alpha_n^+,\alpha_n^-)\big)$ of
$G$ is in positive lexicographic order. The following are equivalent:

\begin{enumerate}
\item $G$ is the unique labeled realization of the degree sequence
$\alpha$.\label{it:def}
\item There are no 2-switches or induced directed 3-cycles in
$G$.\label{it:td}
\item For every triple of distinct indices $i$, $j$ and $k$ with $i <
j$, if $a_{jk} =1$, then $a_{ik} = 1$.\label{it:adj}
\item The Fulkerson-Chen inequalities are satisfied with equality. In
other words, for $1 \le k \le n$,\label{it:fulk}
\[ \sum_{i=1}^k\min(\alpha_i^-,k-1)+\sum_{i=k+1}^n\min(\alpha_i^-,k) =
\sum_{i=1}^k\alpha_i^+.
\]
\end{enumerate}
\end{theorem}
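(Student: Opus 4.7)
The plan is to establish the four conditions equivalent by the cyclic chain (1) $\Rightarrow$ (2) $\Rightarrow$ (3) $\Rightarrow$ (4) $\Rightarrow$ (1). Routing every implication through the new criterion (3) is the feature that makes the proof shorter than the direct approach to (2) $\Leftrightarrow$ (1) $\Leftrightarrow$ (4) found in the existing literature.

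The implication (1) $\Rightarrow$ (2) is classical and contrapositive: a 2-switch or an induced directed 3-cycle admits the obvious arc-replacement described in the paragraph preceding the theorem, producing a second labeled realization of $\alpha$. For (3) $\Rightarrow$ (4) I would count the entries of $A$ in the first $k$ rows column by column: the standard Fulkerson-Chen upper bound on the contribution of column $j$ is $\min(\alpha_j^-,k-1)$ when $j\le k$ and $\min(\alpha_j^-,k)$ when $j>k$, and condition (3) forces the $1$s of column $j$ (other than the diagonal zero) to occupy the smallest possible row indices, so each column attains its bound and summing yields the claimed equality. For (4) $\Rightarrow$ (1), equality in the aggregate bound forces each column individually to attain its bound for every $k$, so the partial column sums $\sum_{i\le k}a_{ij}$ are determined; telescoping in $k$ recovers every entry $a_{ij}$, so $\alpha$ uniquely determines $A$.

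The substantive step is (2) $\Rightarrow$ (3). I would fix $i<j$ and $k\notin\{i,j\}$ with $a_{jk}=1$ and $a_{ik}=0$ and hunt for a forbidden configuration. Since $\alpha_i^+\ge\alpha_j^+$ and $v_k\in N^+(v_j)\setminus N^+(v_i)$, some $v_m\in N^+(v_i)\setminus N^+(v_j)$ exists. If $v_m\ne v_j$, then $v_i,v_j,v_k,v_m$ are four distinct vertices forming a 2-switch. The delicate case is when the only such $v_m$ is $v_j$ itself: then $\alpha_i^+=\alpha_j^+$, positive lexicographic order yields $\alpha_i^-\ge\alpha_j^-$, the arc $(v_i,v_j)$ is present, and $(v_j,v_i)$ is forced to be absent (otherwise $v_i$ would have to lie in $N^+(v_j)\setminus N^+(v_i)=\{v_k\}$). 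Applying the dual argument to in-neighborhoods produces $v_{m'}\in N^-(v_i)\setminus N^-(v_j)$. If $v_{m'}=v_k$, then $v_i,v_j,v_k$ form an induced directed 3-cycle; otherwise $v_{m'}\notin\{v_i,v_j,v_k\}$, and depending on whether $(v_{m'},v_k)$ is an arc, either $\{(v_{m'},v_i),(v_j,v_k)\}$ or $\{(v_{m'},v_k),(v_i,v_j)\}$ gives a 2-switch on four distinct vertices.

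The main obstacle is the ``bad case'' in (2) $\Rightarrow$ (3) just described. When the naive out-neighborhood swap collapses to $v_m=v_j$, one must pivot to the in-neighborhood dual and then branch on the identity of $v_{m'}$ and the status of $(v_{m'},v_k)$; each branch produces a different forbidden configuration, and keeping the distinctness bookkeeping of the four vertices clean in each branch is the only finicky point.
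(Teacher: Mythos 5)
Your proposal is correct and follows essentially the same route as the paper: the same cyclic chain $(1)\Rightarrow(2)\Rightarrow(3)\Rightarrow(4)\Rightarrow(1)$, with the counting argument for $(3)\Rightarrow(4)$, the forced-column reconstruction for $(4)\Rightarrow(1)$, and the same case analysis for $(2)\Rightarrow(3)$ (out-neighborhood difference giving a 2-switch, else $a_{ij}=1$, $a_{ji}=0$, equal out-degrees, then the in-neighborhood branch into an induced 3-cycle or one of two 2-switches). Your phrasing via neighborhood set differences in place of the paper's $\kappa(x,y)$ bookkeeping, and telescoping over all $k$ instead of choosing one $k$ per column, are only cosmetic variations.
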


\begin{proof} The equivalence of $(\ref{it:def})$ and $(\ref{it:td})$
has been shown previously in \cite{Rao1996}. For this, we need only
the implication $(\ref{it:def})\Rightarrow(\ref{it:td})$, which is
shown by the contrapositive: if there were a 2-switch or an induced
directed 3-cycle in $G$, then we can form another graph $G'$ on the
same degree sequence so $G$ does not have a unique realization. Notice
that this implication does not require positive lexicographic order.

$(\ref{it:td})\Rightarrow (\ref{it:adj})$ (Proof by contrapositive:
$\neg(\ref{it:adj})\Rightarrow\neg(\ref{it:td})$.)  Let $n\ge 3$ and
$i$, $j$, $k$ distinct indices so that $i < j$, $a_{jk} =1$ and
$a_{ik} = 0$. Let $l \notin\{i,j,k\}$, if such an index exists, and
note that what follows holds vacuously if $n=3$ and no such $l$
exists.  For this $l$, if $a_{il} = 1$ and $a_{jl} = 0$, then the arcs
$(v_i,v_l)$ and $(v_j,v_k)$ form a 2-switch.  Otherwise, define
$\kappa(x,y) = |\{l \notin\{i,j,k\}\mid a_{il} = x, a_{jl} = y\}|$ for
$x,y\in \{0,1\}$ and notice that $\kappa(1,0) = 0$. Thus, $\alpha_i^+
= a_{ij} + \kappa(1,1)$ and $\alpha_j^+ = a_{ji} + 1 + \kappa(1,1) +
\kappa(0,1)$. Since $\alpha_i^+ \ge \alpha_j^+$, we have $a_{ij} \ge
a_{ji} + 1 + \kappa(0,1)$ so $a_{ij} = 1$, $a_{ji} = 0$, $\kappa(0,1)
= 0$ and $\alpha_i^+ = \alpha_j^+$.

Now we consider the in-degree of $v_i$ and $v_j$. Since $a_{ij} = 1$,
$a_{ji} = 0$ and $\alpha_i^- \ge \alpha_j^-$ there must be a vertex
$v$ so that $(v,v_i)$ is an arc and $(v,v_j)$ is not an arc. If $v =
v_k$, then the vertices $v_i$, $v_j$ and $v_k$ form an induced
directed 3-cycle. Otherwise, set $v = v_l$
and consider $a_{lk}$. If $a_{lk} = 0$, then the arcs $(v_l, v_i)$ and
$(v_j,v_k)$ form a 2-switch. Otherwise, $a_{lk} = 1$ and the arcs
$(v_l,v_k)$ and $(v_i,v_j)$ form a 2-switch. 

$(\ref{it:adj})\Rightarrow (\ref{it:fulk})$
Let $A_k$ be the $k\times n$ submatrix of $A$ with only the first
$k$ rows. We count the number of ones in this matrix by rows to obtain
$\sum_{i=1}^k \alpha_i^+$ and note that if $j \le k$ there are
$\sum_{i=1}^k a_{ij} = \min(\alpha_j^-, k-1)$ ones in column $j$ and if $j
> k$ there are
$\sum_{i=1}^k a_{ij} = \min(\alpha_j^-, k)$ ones in column $j$, then the
count of ones by column is
$\sum_{j=1}^k\min(\alpha_j^-,k-1) + \sum_{j=k+1}^n\min(\alpha_j^-, k)$. Thus
$\sum_{j=1}^k\min(\alpha_j^-,k-1) + \sum_{j=k+1}^n\min(\alpha_j^-, k) =
\sum_{i=1}^k \alpha_i^+$, as desired. Notice that this implication does not
require positive lexicographic order. 

$(\ref{it:fulk})\Rightarrow (\ref{it:def})$. Assume that $\alpha$ is in
positive lexicographic order and that we have equality in the
Fulkerson-Chen inequalities. We will form the adjacency matrix $A$ one
column at a time. Let $c(i,k) = |\{j\le k \mid a_{ji} = 1\}|$, the
number of ones in the first $k$ rows of the $i^{th}$ column. 
For any $k$, we have that the number of ones in the
submatrix $A_k$ is given by 
$\sum_{i=1}^k \alpha_i^+ = \sum_{i=1}^n c(i,k)$. Notice that for each
$i$ and $k$ we have
\[
c(i,k) \le \left\{
\begin{array}{l@{\qquad}l}
\min(\alpha_i^-, k-1) & i \le k\\
\min(\alpha_i^-, k) & i > k.
\end{array}
\right.
\]
Since we have equality in the Fulkerson-Chen conditions, we must also
have equality for each $c(i,k)$. In particular, considering column
$i$, if $\alpha_i^- \ge i$, then let $k = \alpha_i^-+1$. Notice that
$c(i,k) = \min(\alpha_i^-,k-1) = \alpha_i^-$, and, since $a_{ii} = 0$,
there are only $\alpha_i^-$ positions for the ones in this column of
$A_k$. Therefore, $a_{ji} = 1$ for every $j \not= i$ and $j \le k =
\alpha_i^-+1$. This is the number of ones in this column so the rest are
zeros. If $\alpha_i^- < i$, let $k = \alpha_i^-$. Again, $c(i,k) =
\min(\alpha_i^-,k) = \alpha_i^-$ and there are only $\alpha_i^-$
positions for ones in this column of $A_k$. Thus, $a_{ji} = 1$ for every
$j \le k$ and $a_{ji} = 0$ for every $j > k$. Each of these choices
was forced, so every arc in $G$ is forced and $G$ is the unique
realization of $\alpha$. The only place that this requires positive
lexicographic order is the set-up: to satisfy the Fulkerson-Chen conditions
with equality requires $\alpha$ to be in positive lexicographic order.
\end{proof}

We call any digraph that satisfies these conditions {\bf threshold}.
This definition generalizes the well-studied concept of threshold
graphs~\cite{Mahadev1995}.

As mentioned above, Rao, Jana and Bandyopadhyay~\cite{Rao1996} showed
the equivalence of conditions \ref{thm:td}.\ref{it:def} and
\ref{thm:td}.\ref{it:td} in the context of Markov chains for
generating random zero-one matrices with zero trace. Condition
\ref{thm:td}.\ref{it:fulk} appears in the literature (for example,
Berger~\cite{Berger2011} states this as the definition of threshold
digraphs), but we cannot find a proof of its equivalence to the first
two conditions. Condition \ref{thm:td}.\ref{it:adj} appears to be
entirely new as of this paper, although
Berger~\cite{Berger2011b,Cogis1982} briefly mentions a similar
criteria, without proof, in the context of corrected Ferrers diagrams
in her thesis.

There are two places where the order of $\alpha$ is important. One is
in the statement of condition \ref{thm:td}.\ref{it:fulk}. The second
is in the proof of that condition \ref{thm:td}.\ref{it:td} implies
condition \ref{thm:td}.\ref{it:adj}. However, since condition
\ref{thm:td}.\ref{it:td} does not depend on the order of the vertices,
but on the graph structure, we may characterize threshold digraphs in
the absence of the condition that $\alpha$ is in positive
lexicographic order. In particular, condition
\ref{thm:td}.\ref{it:adj} gives that the digraph is threshold even
when the degree sequence is unordered.

\begin{corollary}
Let $G$ be a digraph and $A= [a_{ij}]$ an adjacency matrix of $G$. Define
$\alpha_i^+ = \sum_{j=1}^n a_{ij}$ and $\alpha_j^-= \sum_{i=1}^n
a_{ij}$. If for every triple of distinct indices $i$, $j$ and $k$ with $i <
j$ and $a_{jk} =1$, it also holds that  $a_{ik} = 1$, then $G$ is a
threshold digraph. \label{cor:adj}
\end{corollary}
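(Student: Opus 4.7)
The plan is to bootstrap Theorem~\ref{thm:td}. Condition~\ref{thm:td}.\ref{it:td} — absence of 2-switches and induced directed 3-cycles — is a purely structural property of the underlying digraph, making no reference to the vertex ordering. So the whole proof reduces to showing that the hypothesis of the corollary implies \ref{thm:td}.\ref{it:td} without any ordering assumption on $\alpha$. Once that is done, we relabel the vertices to put $\alpha$ in positive lexicographic order (which does not disturb \ref{thm:td}.\ref{it:td}) and invoke Theorem~\ref{thm:td} to conclude that $G$ is the unique labeled realization of its degree sequence, i.e.\ threshold.

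For the structural implication, first suppose $G$ contains an induced directed 3-cycle on vertices $x,y,z$; after permuting these three labels among themselves we may assume $x<y<z$. The cycle has one of two cyclic orientations. If it is $x\to y\to z\to x$, apply the hypothesis with $(i,j,k)=(x,y,z)$ to the arc $(v_y,v_z)$ to conclude $a_{xz}=1$, contradicting the fact that $(v_x,v_z)$ is not an arc of an induced 3-cycle of this orientation. If it is $x\to z\to y\to x$, apply the hypothesis with $(i,j,k)=(x,z,y)$ to the arc $(v_z,v_y)$ to force $a_{xy}=1$, contradicting the induced nature of this orientation. In either case we get a contradiction.

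Next, suppose $w,x,y,z$ form a 2-switch with $(v_w,v_x),(v_y,v_z)\in E$ and $(v_w,v_z),(v_y,v_x)\notin E$. The four vertices are distinct, so either $w<y$ or $y<w$. In the former case the hypothesis applied with $(i,j,k)=(w,y,z)$ to $(v_y,v_z)$ forces $(v_w,v_z)\in E$; in the latter case, applied with $(i,j,k)=(y,w,x)$ to $(v_w,v_x)$, it forces $(v_y,v_x)\in E$. Each contradicts the 2-switch configuration. The only subtle point in the entire argument is that condition~\ref{thm:td}.\ref{it:adj} requires only $i<j$, not a comparison involving $k$, so a single case split on the smaller of $w,y$ (and analogously on the smallest of the three 3-cycle labels) suffices; beyond this there is no real obstacle.
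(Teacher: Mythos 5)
Your proposal is correct and follows essentially the same route as the paper: show that the column-nesting hypothesis rules out 2-switches and induced directed 3-cycles (a purely structural, order-independent condition), then relabel into positive lexicographic order and apply the equivalence of Theorem~\ref{thm:td}. Your explicit case split on the two cyclic orientations of the 3-cycle and on which of $w,y$ is smaller matches the paper's ``without loss of generality, take the smallest index to play the role of $i$'' argument.
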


\begin{proof}
We show that such a graph cannot have 2-switches or induced directed 3-cycles.
A 2-switch is formed with
four distinct indices, $i$, $j$, 
$k$ and $l$ so that $a_{ij} = a_{kl} = 1$ and $a_{il} = a_{kj} =
0$. Without the loss of generality, suppose that $i < k$. If condition
\ref{thm:td}.\ref{it:adj} holds, then $a_{kl} = 1$ gives $a_{il} = 1$, so there
are no 2-switches. Similarly, an induced directed
3-cycle is formed with three distinct indices, $i$, $j$ and $k$ so
that $a_{ij} = a_{jk} = a_{ki} = 1$ and $a_{ik} = a_{kj} = a_{ai} =
0$. Suppose that $i$ is the smallest of the three indices.  If condition
\ref{thm:td}.\ref{it:adj} holds and $a_{jk} = 1$, then $a_{ik}=1$ so we
cannot have 
an induced directed 3-cycle, either.
\end{proof}

Corollary~\ref{cor:adj} gives us a constructive method for creating
threshold digraphs.

\begin{corollary}\label{cor:construct}
Given a sequence $\beta = (\beta_1,\ldots, \beta_n)$, with $0 \le
\beta_j < n$ for all $j$, if we define an $n\times n$
matrix $A = [a_{ij}]$ by
\[
a_{ij} = \left\{\begin{array}{l@{\qquad\qquad}l}
1 & i<j \ \mathrm{and} \ i \le \beta_j\\
1 & i>j \ \mathrm{and} \ i \le \beta_j+1\\
0 & \mathrm{otherwise},
\end{array}
\right.
\]
then the matrix $A$ is the adjacency matrix of a threshold
digraph. Furthermore, if $G$ is a threshold digraph and $\alpha =
\big((\alpha_1^+,\alpha_1^-),\ldots, (\alpha_n^+,\alpha_n^-)\big)$,
then the sequence $\beta = (\alpha^-_1,\ldots,\alpha_n^-)$ generates
an adjacency matrix of $G$.
\end{corollary}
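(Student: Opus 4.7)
The statement has two parts: (a) any $\beta$-sequence yields a threshold digraph via the given formula, and (b) the in-degree sequence of an existing threshold digraph reproduces it via the same formula. My plan is to handle (a) with Corollary~\ref{cor:adj}, and (b) by comparing the formula against the explicit column description of the adjacency matrix that emerged in the proof of $(\ref{it:fulk})\Rightarrow(\ref{it:def})$ in Theorem~\ref{thm:td}.

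For (a), I would first observe that the formula forces $a_{ii}=0$, since neither the $i<j$ branch nor the $i>j$ branch can fire when $i=j$. To invoke Corollary~\ref{cor:adj}, fix distinct $i,j,k$ with $i<j$ and $a_{jk}=1$; the goal is $a_{ik}=1$. The hypothesis $a_{jk}=1$ splits into two cases. If $j<k$ and $j\le \beta_k$, then $i<j<k$ and $i<j\le \beta_k$, so the $i<k$ clause of the formula fires. If $j>k$ and $j\le \beta_k+1$, I further split on the position of $i$ relative to $k$: when $i<k$, the chain $i<k<j\le \beta_k+1$ gives $i\le \beta_k$, so the $i<k$ clause applies; when $i>k$, the bound $i<j\le \beta_k+1$ gives $i\le \beta_k+1$, so the $i>k$ clause applies. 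In every case $a_{ik}=1$, so $A$ is the adjacency matrix of a threshold digraph.

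For (b), let $G$ be a threshold digraph with adjacency matrix $A'=[a'_{ij}]$ whose degree sequence $\alpha$ is in positive lexicographic order, and set $\beta_j=\alpha_j^-$. The proof of $(\ref{it:fulk})\Rightarrow(\ref{it:def})$ already describes $A'$ column by column: the ones of column $i$ occupy the rows $\{1,\dots,\alpha_i^-+1\}\setminus\{i\}$ when $\alpha_i^-\ge i$, and the rows $\{1,\dots,\alpha_i^-\}$ when $\alpha_i^-<i$. I would now match these two column patterns against the formula for $a_{ji}$, treating the $j<i$ and $j>i$ branches separately, together with the trivial $j=i$ case. In both regimes ($\beta_i\ge i$ or $\beta_i<i$) the formula reproduces exactly the forced pattern above, so $A=A'$.

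The main obstacle, such as it is, lies not in novelty but in bookkeeping: one must track carefully how the ``$+1$'' in the $i>j$ clause of the formula absorbs the missing diagonal entry, so that the split ``$i\le \beta_j$ versus $i\le\beta_j+1$'' matches cleanly with the split ``$\alpha_i^-\ge i$ versus $\alpha_i^-<i$'' in the column description. Once this correspondence is set out, both parts follow by direct inspection, with no need to re-prove any threshold characterization from scratch.
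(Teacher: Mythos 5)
Your proposal is correct and follows essentially the same route as the paper: part (a) is exactly the paper's argument (verify condition \ref{thm:td}.\ref{it:adj} for the constructed matrix and invoke Corollary~\ref{cor:adj}), and part (b) pins down the columns of $G$'s adjacency matrix as top-packed with the diagonal skipped, which is the same forced pattern the paper appeals to. The only cosmetic difference is that you extract that column pattern from the proof of $(\ref{it:fulk})\Rightarrow(\ref{it:def})$, whereas the paper derives it directly from condition \ref{thm:td}.\ref{it:adj} together with the column sums $\alpha_j^-$; both are valid and your version simply supplies more of the bookkeeping the paper leaves implicit.
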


\begin{proof}
  Since $A$ satisfies condition \ref{thm:td}.\ref{it:adj},
  Corollary~\ref{cor:adj} gives that it is threshold. For a threshold
  digraph $G$, the only matrix which satisfies both condition
  \ref{thm:td}.\ref{it:adj} and the condition $\sum_{i=1}^n a_{ij} =
  \alpha_j^-$ is the matrix formed as above. Thus, $A$ must be the
  adjacency matrix of $G$.
\end{proof}

Since Corollary~\ref{cor:construct} ties together sequences and
threshold digraphs, one application of it is to provide upper and
lower bounds on the number of threshold digraphs for a given
$n$. However, if we permute a sequence, then the resulting threshold
digraph may or may not be isomorphic.  For example, on three vertices
the six orders of the sequence $(2,1,0)$ produce two non-isomorphic
threshold digraphs. The sequences $(2,1,0),\ (1,2,0)$, and $(2,0,1)$
all produce the same digraph with degree sequence
$((1,2),(1,1),(1,0))$ in positive lexicographic order, while the
remaining three sequences produce the threshold digraph with degree
sequence $((2,0),(1,1),(0,2))$ in positive lexicographic order.

\begin{corollary}
Define $TD(n)$ as the number of threshold digraphs on $n$
vertices. Then $\frac{n^n}{n!} \le TD(n) \le n^n$.
\end{corollary}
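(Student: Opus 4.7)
The plan is to apply Corollary~\ref{cor:construct}, which assigns to each integer sequence $\beta = (\beta_1, \ldots, \beta_n)$ with $0 \le \beta_j < n$ an explicit labeled threshold digraph $G(\beta)$, and then to compare the size of the sequence space ($n^n$) with the number of isomorphism classes of threshold digraphs, which is what $TD(n)$ counts (as made explicit by the preceding paragraph on orderings of $(2,1,0)$).

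For the upper bound $TD(n) \le n^n$, I would invoke the second half of Corollary~\ref{cor:construct}: every threshold digraph arises as $G(\beta)$ for some $\beta$, obtained by labeling its vertices so the degree sequence is in positive lexicographic order and then reading off the in-degrees. Hence the composition $\beta \mapsto G(\beta) \mapsto [G(\beta)]$ is a surjection from the $n^n$ sequences onto isomorphism classes of threshold digraphs, and the upper bound follows immediately.

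For the lower bound $TD(n) \ge n^n/n!$, I would bound the fibers of this surjection. The underlying map $\beta \mapsto G(\beta)$ is injective on \emph{labeled} digraphs, because the in-degree at vertex $j$ in $G(\beta)$ equals $\beta_j$, so distinct sequences determine distinct labeled adjacency matrices. Since each isomorphism class contains at most $n!$ labeled representatives (one per relabeling of vertices), the surjection onto isomorphism classes is at most $n!$-to-one, and dividing $n^n$ by $n!$ yields the lower bound. The argument is routine counting once the labeled-versus-unlabeled distinction has been sorted out; I do not anticipate a substantive obstacle.
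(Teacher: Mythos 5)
Your proof is correct and follows exactly the argument the paper intends: the paper states this corollary without a formal proof, but the preceding paragraph (on permuting the sequence $(2,1,0)$) sketches precisely your counting via Corollary~\ref{cor:construct} --- surjectivity of $\beta \mapsto G(\beta)$ onto threshold digraphs for the upper bound, and injectivity on labeled realizations together with fibers of size at most $n!$ under isomorphism for the lower bound. You have simply made explicit the labeled-versus-unlabeled bookkeeping that the paper leaves implicit.
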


\section{Digraph Realizability}\label{S: DigraphRealiz}

The idea of condition \ref{thm:td}.\ref{it:fulk} comes from
what are known as the Fulkerson-Chen inequalities for digraph
realizability. Fulkerson studied digraph realizability in the context
of zero-one matrices with zero trace~\cite{Fulkerson1960}. For a given
degree sequence, Fulkerson gave a system of $2^n-1$ inequalities
that are satisfied if and only if the degree sequence is
digraphical. The formulation that we typically use is due to
Chen~\cite{Chen1966}, which reduces the number of inequalities from
$2^n-1$ to $n$ when the degree sequence is in negative lexicographic
order. Our consideration of threshold digraphs gives a new proof of this
result. 

This proof uses a partial order $\preceq$ on integer sequences. In
particular, for sequences $\alpha = (\alpha_1, \ldots, \alpha_n)$ and
$\beta =(\beta_1,\ldots, \beta_n)$ we say $\alpha \preceq \beta$ if
$\sum_{i=1}^k \alpha_i \le \sum_{i=1}^k \beta_i$ for $k = 1, \ldots,
n-1$ and $\sum_{i=1}^n \alpha_i = \sum_{i=1}^n \beta_i$. One important
property of this partial order is that if $\alpha \not= \beta$ and
$\alpha \preceq \beta$, then there is an index $i$ such
that $\alpha_i < \beta_i$ and a first index $j > i$ with $\sum_{k=1}^j
\alpha_k = \sum_{k=1}^j \beta_j$. 

\begin{theorem}\label{thm:real}
Let $\alpha = \big((\alpha_1^+,\alpha_1^-),\ldots,
(\alpha_n^+,\alpha_n^-)\big)$ be a degree sequence in positive
lexicographic order. There is a digraph $G$ which realizes
$\alpha$ if and only if $\sum\alpha_i^+ = \sum\alpha_i^-$ and for
every $k$ with $1 \le k < n$
\[
\sum_{i=1}^k\min(\alpha_i^-,k-1)+\sum_{i=k+1}^n\min(\alpha_i^-,k) \ge
\sum_{i=1}^k\alpha_i^+.
\]
\end{theorem}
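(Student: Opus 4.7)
My plan is to handle necessity by a direct counting argument, and sufficiency by using the explicit threshold digraph construction from Corollary~\ref{cor:construct} as a starting configuration that can be rewired, one arc at a time, into a realization of $\alpha$.

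For necessity, I would take any realization $G$ of $\alpha$ with adjacency matrix $A$ and double count the ones in the first $k$ rows of $A$ exactly as in $(\ref{it:adj})\Rightarrow(\ref{it:fulk})$ of Theorem~\ref{thm:td}. Row sums give $\sum_{i=1}^k \alpha_i^+$, while column sums give at most $\min(\alpha_j^-, k-1)$ for $j \le k$ (using $a_{jj}=0$) and at most $\min(\alpha_j^-, k)$ for $j > k$. The equality $\sum \alpha_i^+ = \sum \alpha_i^-$ comes from the same count applied to all of $A$.

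For sufficiency, I would apply Corollary~\ref{cor:construct} with $\beta = (\alpha_1^-,\ldots,\alpha_n^-)$ to build a threshold digraph $T$ with in-degree sequence $\alpha^-$; let $\tau^+$ denote its out-degree sequence. Because $T$ is threshold, the argument in $(\ref{it:adj})\Rightarrow(\ref{it:fulk})$ of Theorem~\ref{thm:td} yields
\[
\sum_{i=1}^k \tau_i^+ = \sum_{i=1}^k \min(\alpha_i^-,k-1) + \sum_{i=k+1}^n \min(\alpha_i^-,k),
\]
which by hypothesis dominates $\sum_{i=1}^k \alpha_i^+$ for every $k$ and equals it when $k=n$, so $\alpha^+ \preceq \tau^+$. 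I would then iteratively rewire $T$ to drive $\tau^+$ to $\alpha^+$ while holding in-degrees fixed. If $\tau^+ \neq \alpha^+$, take the least $i$ with $\alpha_i^+ < \tau_i^+$ and the first $j > i$ at which the partial sums of $\alpha^+$ and $\tau^+$ coincide; the partial-order property stated before the theorem forces $\alpha_j^+ > \tau_j^+$. Locate a vertex $v \notin \{v_i,v_j\}$ that is an out-neighbor of $v_i$ but not of $v_j$, then replace $(v_i,v)$ by $(v_j,v)$. This leaves every in-degree unchanged, shifts $\tau^+$ one step toward $\alpha^+$, preserves $\alpha^+ \preceq \tau^+$, and strictly decreases the nonnegative integer potential $\sum_m \max(\tau_m^+ - \alpha_m^+,0)$.

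The main obstacle is guaranteeing that such a vertex $v$ always exists. Here the positive lexicographic hypothesis on $\alpha$ is essential: since $i < j$, we have $\alpha_i^+ \ge \alpha_j^+$, hence $\tau_i^+ > \alpha_i^+ \ge \alpha_j^+ > \tau_j^+$, so $\tau_i^+ \ge \tau_j^+ + 2$. A pigeonhole count on the out-neighborhoods of $v_i$ and $v_j$ restricted to $V \setminus \{v_i,v_j\}$ then produces the required $v$. Since the potential strictly decreases at each step, the procedure halts, and the resulting digraph realizes $\alpha$.
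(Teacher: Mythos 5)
Your proposal is correct and follows essentially the same route as the paper: necessity by the same double count of ones in the top $k$ rows, and sufficiency by building the threshold digraph of Corollary~\ref{cor:construct} from $\alpha^-$, observing $\alpha^+ \preceq \tau^+$, and repeatedly shifting an out-arc from the first over-saturated vertex $v_i$ to the first compensating vertex $v_j$, with the same gap-of-two pigeonhole guaranteeing a valid target $v$. The only differences are cosmetic (graph-theoretic rather than matrix phrasing, and the potential $\sum_m \max(\tau_m^+-\alpha_m^+,0)$ in place of the paper's $\tfrac12\sum_m|\alpha_m^+-\beta_m^{(t)}|$).
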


\begin{proof}
Suppose that $G$ realizes $\alpha$ with adjacency matrix $A$. Define
\[c(i,k)=|\{j\le k \mid a_{ji} = 1\}|\] as in the proof of
Theorem~\ref{thm:td}, we see that
\[
\sum_{i=1}^k \alpha_i^+ = \sum_{i=1}^n c(i,k) \le
\sum_{i=1}^k\min(\alpha_i^-,k-1)+\sum_{i=k+1}^n\min(\alpha_i^-,k),
\]
as desired.

Suppose that $\alpha$ is a sequence which satisfies the above
inequalities. Construct an adjacency matrix $T$ as in
Corollary~\ref{cor:construct} from the sequence $\alpha^-$. We will
iteratively form a sequence of digraphs $T= B^{(0)}, B^{(1)}, \ldots,
B^{(t_{max})}$ with $B^{(t_{max})}$ an adjacency matrix realizing
$\alpha$, with $\beta^{(t)}$ the sequence of row sums in the matrix
$B^{(t)}$. By hypothesis, $\alpha^+ \preceq \beta^{(0)}$.  If
$\alpha^+=\beta^{(0)}$, then $t_{max} = 0$ and $T = B^{(0)}$ is the
adjacency matrix of the desired graph. Otherwise, define $t_{max} =
\frac12\sum_{i=1}^n|\alpha_i^+-\beta_i^{(0)}|$, and let $r(1,t)$ and
$r(2,t)$ be indices such that $r(1,t)$ is the smallest index where
$\alpha_{r(1,t)}^+ < \beta_{r(1,t)}^{(t)}$ and $r(2,t)$ the first
index after $r(1,t)$ such that $\sum_{i=1}^{r(2,t)} \alpha_i^+ =
\sum_{i=1}^{r(2,t)}\beta_i^{(t)}$.  For $t < t_{max}$, define
$\beta^{(t+1)} = (\beta^{(t+1)}_1, \ldots, \beta^{(t+1)}_n)$ as the
sequence with
\[
\beta^{(t+1)}_i = \left\{\begin{array}{l@{\qquad}l}
\beta^{(t)}_i - 1 & i = r(1,t)\\
\beta^{(t)}_i + 1 & i = r(2,t)\\
\beta^{(t)}_i & \mathrm{otherwise}.
\end{array}\right.
\]
Clearly $\beta^{(t)} \succ \beta^{(t+1)} \succeq \alpha^+$.  Since $\alpha_{r(2,t)}^{+}-1 \geq \beta_{r(2,t)}^{(t)}$ and
$\alpha_{r(1,t)}^{+}+1 \leq \beta_{r(1,t)}^{(t)}$, we have
\[\beta_{r(1,t)}^{(t)} - \beta_{r(2,t)}^{(t)} \ge
(\alpha_{r(1,t)}^{+}+1)-(\alpha_{r(2,t)}^{+}-1) \ge 2.\] Thus, there
are columns $c(1,t)$ and $c(2,t)$ of $B^{(t)}$ that have ones in row
$r(1,t)$ and zeros in row $r(2,t)$.  Either $c(1,t)\neq r(2,t)$ or
$c(2,t)\neq r(2,t)$; therefore, without the loss of generality, we may suppose
that $c(1,t) \not= r(2,t)$. Let $B^{(t+1)}$ be the matrix with
\[
b_{ij}^{(t+1)} =  \left\{\begin{array}{l@{\qquad}l}
0 & i = r(1,t), j = c(1,t) \\
1 & i = r(2,t), j = c(1,t) \\
b_{ij}^{(t)} & \mathrm{otherwise}.
\end{array}\right.
\]
Since $\sum_{i=1}^n|\alpha_i^+-\beta_i^{(t+1)}| =
\sum_{i=1}^n|\alpha_i^+-\beta_i^{(t)}|-2$, we have that
$$\sum_{i=1}^n|\alpha_i^+-\beta_i^{(t_{max})}| =
\sum_{i=1}^n|\alpha_i^+-\beta_i^{(0)}| -2t_{max} = 0.$$
Therefore, 
$\beta^{(t_{max})} = \alpha^+$ and $B^{(t_{max})}$ is a realization of
$\alpha$, as desired.
\end{proof}

This proof is constructive;
given a digraphical degree sequence 
$\alpha$, we can construct a realization of $\alpha$ by repeatedly
moving the ones down in the columns as in the proof of
Theorem~\ref{thm:real}. There are other construction algorithms for
digraphs, most notably that of Kleitman and Wang~\cite{Kleitman1973}.

\section{Applications}\label{S: Applications}

What follows is a quick survey of some consequences of
Theorem~\ref{thm:td}. Some details are omitted since the first two
results are immediate. 

Threshold graphs, in the undirected sense, are closely tied to the
theory of split graphs. An analogous study of {\bf split digraphs} is
given in LaMar~\cite{LaMar2012}. Using the fourth characterization of
threshold digraphs and a result by LaMar, we
have Corollary~\ref{cor:split}.

\begin{corollary}\label{cor:split}
Every threshold digraph is a split digraph.
\end{corollary}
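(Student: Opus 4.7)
The plan is to read off Corollary~\ref{cor:split} directly from condition~\ref{thm:td}.\ref{it:fulk} combined with LaMar's degree-sequence characterization of split digraphs. In the undirected setting, the Hammer--Simeone theorem says a graph is split precisely when the Erd\H{o}s--Gallai inequalities hold with equality at one distinguished index; I expect LaMar's result to be the directed analogue, characterizing split digraphs by equality in the Fulkerson--Chen inequalities at a single distinguished index $k^{\ast}$ determined by $\alpha$.

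First I would state (or quote) LaMar's characterization in exactly that form: a digraphical sequence $\alpha$ in positive lexicographic order is realized by a split digraph if and only if
\[
\sum_{i=1}^{k^{\ast}}\min(\alpha_i^-,k^{\ast}-1)+\sum_{i=k^{\ast}+1}^{n}\min(\alpha_i^-,k^{\ast})=\sum_{i=1}^{k^{\ast}}\alpha_i^+,
\]
for the appropriate value of $k^{\ast}$ (the directed analogue of $\max\{i:d_i\ge i-1\}$). Second, I would invoke Theorem~\ref{thm:td}.\ref{it:fulk}: a threshold digraph $G$ satisfies the Fulkerson--Chen equation with equality for \emph{every} $1\le k\le n$. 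In particular, equality holds at $k=k^{\ast}$, so $\alpha$ satisfies the hypothesis of LaMar's theorem.

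Third, I would conclude that every realization of $\alpha$ is split, and since $G$ is one such realization (indeed, by Theorem~\ref{thm:td}.\ref{it:def}, the unique one), $G$ is a split digraph. The main obstacle is not mathematical content but bookkeeping: verifying that LaMar's distinguished index $k^{\ast}$ and the Fulkerson--Chen equality in condition~\ref{thm:td}.\ref{it:fulk} are literally the same expression (same ordering convention, same treatment of the diagonal via the $k-1$ versus $k$ split), so that ``equality at every $k$'' genuinely implies ``equality at $k^{\ast}$'' without any re-indexing subtleties. Once that alignment is checked, the corollary reduces to the trivial observation that a statement holding for all $k$ holds in particular for one $k$.
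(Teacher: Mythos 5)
Your proposal matches the paper's argument: the authors derive Corollary~\ref{cor:split} exactly by combining condition~\ref{thm:td}.\ref{it:fulk} (Fulkerson--Chen equalities for all $k$) with LaMar's degree-sequence characterization of split digraphs, which requires equality only at one distinguished index; the paper states the details are immediate and omits them. Your write-up is the same proof with the bookkeeping made explicit, so it is correct and not a different route.
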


There is also a study of the relationship between different threshold
graphs, as subgraphs of one another, by Merris and
Roby~\cite{Merris2005}. As a consequence of the third characterization
of threshold digraphs, we have Corollary~\ref{cor:addarc}.

\begin{corollary}\label{cor:addarc}
  Given a threshold digraph $G$, if $G$ is nonempty, then there is an
  arc $e$ in $G$ such that $G-e$ is a threshold digraph. If $G$ is not
  complete, then there is an arc $e$ not in $G$ such that $G+e$ is a
  threshold digraph.
\end{corollary}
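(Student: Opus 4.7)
The plan is to invoke characterization \ref{thm:td}.\ref{it:adj} together with its converse in Corollary~\ref{cor:adj}. After ordering the vertices of $G$ so that the adjacency matrix $A=[a_{ij}]$ is in positive lexicographic order, condition \ref{thm:td}.\ref{it:adj} will tell us that, for each column $k$, the set of rows containing a one is an initial segment of $\{1,\ldots,n\}\setminus\{k\}$. The corollary therefore reduces to choosing an entry whose modification preserves this ``initial segment in each column'' structure under the same vertex ordering; Corollary~\ref{cor:adj} will then give that the modified digraph is still threshold.

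For the removal, I would pick any column $j^*$ containing a one (one exists because $G$ is nonempty) and let $i^*$ be the \emph{largest} row index with $a_{i^*j^*}=1$. Deleting the arc $e=(v_{i^*},v_{j^*})$ flips this single entry from $1$ to $0$. The only triples that could newly violate \ref{thm:td}.\ref{it:adj} are those of the form $(i^*,j,j^*)$ with $j>i^*$ and $j\ne j^*$, where one would need $a_{jj^*}=1$ to imply $a_{i^*j^*}=1$; by the maximality of $i^*$ every such $a_{jj^*}$ is already zero, so the implication holds vacuously and $G-e$ remains threshold.

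For the addition, I would pick any column $j^*$ containing an off-diagonal zero (one exists because $G$ is not complete) and let $i^*$ be the \emph{smallest} row index with $i^*\ne j^*$ and $a_{i^*j^*}=0$. Inserting the arc $e=(v_{i^*},v_{j^*})$ flips this entry from $0$ to $1$. The only triples that could newly violate \ref{thm:td}.\ref{it:adj} are those of the form $(i,i^*,j^*)$ with $i<i^*$ and $i\ne j^*$, where the now-true conclusion $a_{i^*j^*}=1$ demands $a_{ij^*}=1$; by the minimality of $i^*$ every such $a_{ij^*}$ is already one, so \ref{thm:td}.\ref{it:adj} is preserved and $G+e$ is threshold. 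I do not anticipate any real obstacle here: once one recognizes the column-wise ``staircase'' geometry of \ref{thm:td}.\ref{it:adj}, the natural choices of arc to delete or insert are essentially forced.
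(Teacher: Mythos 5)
Your proof is correct and follows exactly the route the paper intends: the paper omits the details, saying only that the corollary is ``a consequence of the third characterization,'' and your argument supplies precisely those details --- order the vertices so that Theorem~\ref{thm:td}.\ref{it:adj} holds, delete the lowest one (resp.\ insert at the highest off-diagonal zero) in some column, and invoke Corollary~\ref{cor:adj}, which crucially does not require the new degree sequence to remain in positive lexicographic order. No gaps.
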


It has been observed that the ordering required by
Theorem~\ref{thm:real} can be relaxed and still only require the $n$
inequalities stated. Berger~\cite{Berger2011} observed that we need
only require nonincreasing order in the first component. Our theorem
suggests that this can be relaxed even more, but it is not readily
apparent which orders should be considered for graphicality. However,
we can show that nonincreasing order in the first component is
sufficient.

\begin{theorem}
Let $\alpha$ be an integer pair sequence satisfying $\alpha_i^+ \ge
\alpha_{i+1}^+$ for every $1 \le i < n$. If $\sum \alpha_i^+ =
\sum\alpha_i^-$ and
\begin{equation}
\sum_{i=1}^k\min(\alpha_i^-,k-1)+\sum_{i=k+1}^n\min(\alpha_i^-,k) \ge
\sum_{i=1}^k\alpha_i^+\label{eqn:sum conditions}
\end{equation}
for $1 < k < n$, then $\alpha$ is digraphical.
\end{theorem}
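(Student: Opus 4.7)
The plan is to reuse the constructive proof of Theorem~\ref{thm:real} essentially verbatim, observing that that proof uses only the nonincreasing order of $\alpha^+$---the positive lexicographic order hypothesis is stronger than what the proof actually requires.

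Following the proof of Theorem~\ref{thm:real}, I would build the base matrix $T$ by applying Corollary~\ref{cor:construct} to the sequence $\alpha^-$ in its given (possibly unsorted) order, and let $\beta^{(0)}$ denote its row-sum sequence. Counting the ones in the first $k$ rows of $T$ column by column---exactly as in the argument for $(\ref{it:adj}) \Rightarrow (\ref{it:fulk})$ in Theorem~\ref{thm:td}---gives
$$\sum_{i=1}^k \beta_i^{(0)} \;=\; \sum_{j \le k} \min(\alpha_j^-, k-1) \;+\; \sum_{j > k} \min(\alpha_j^-, k),$$
and this identity does not depend on the order of $\alpha^-$. Combined with the hypothesis~(\ref{eqn:sum conditions}) and the total-sum equality $\sum \alpha_i^+ = \sum \alpha_i^-$, this yields $\alpha^+ \preceq \beta^{(0)}$.

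From here the iterative swap procedure in the proof of Theorem~\ref{thm:real} runs without modification, producing matrices $B^{(0)} = T, B^{(1)}, \ldots, B^{(t_{max})}$ with $B^{(t_{max})}$ realizing $\alpha$. The only place where any sortedness enters is the bound $\beta^{(t)}_{r(1,t)} - \beta^{(t)}_{r(2,t)} \ge 2$, which guarantees that a valid swap column exists; this inequality uses only $\alpha^+_{r(1,t)} \ge \alpha^+_{r(2,t)}$, which follows from $r(1,t) < r(2,t)$ together with our hypothesis that $\alpha^+$ is nonincreasing. There is no essential obstacle here---the work is simply in verifying line by line that the proof of Theorem~\ref{thm:real} never actually invokes the order of $\alpha^-$, so that the hypothesis on $\alpha$ can be correspondingly weakened.
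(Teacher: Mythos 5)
Your proof is correct, but it takes a genuinely different route from the paper's. The paper keeps Theorem~\ref{thm:real} as a black box and reduces to it: if $\alpha$ is not in positive lexicographic order, pick adjacent indices with $\alpha_l^+=\alpha_{l+1}^+$ and $\alpha_l^-<\alpha_{l+1}^-$; swapping the two in-degrees merely permutes the multiset of degree pairs (so digraphicality is unaffected), and a careful comparison of the row sums of the two matrices built from Corollary~\ref{cor:construct} shows the system~(\ref{eqn:sum conditions}) holds for one sequence iff it holds for the other; iterating sorts $\alpha$ into positive lexicographic order and Theorem~\ref{thm:real} finishes. You instead re-open the proof of Theorem~\ref{thm:real} and argue it already works under the weaker hypothesis, and you correctly isolate the only two places where ordering could enter: the identity $\sum_{i=1}^k\beta_i^{(0)}=\sum_{j\le k}\min(\alpha_j^-,k-1)+\sum_{j>k}\min(\alpha_j^-,k)$, which indeed holds for the matrix of Corollary~\ref{cor:construct} built from an arbitrarily ordered $\alpha^-$ (column $j$ carries $\min(\alpha_j^-,k-1)$ or $\min(\alpha_j^-,k)$ ones among its first $k$ rows according as $j\le k$ or $j>k$, regardless of the other columns), and the bound $\beta^{(t)}_{r(1,t)}-\beta^{(t)}_{r(2,t)}\ge 2$, which uses only $\alpha^+_{r(1,t)}\ge\alpha^+_{r(2,t)}$; the remaining steps (existence of the swap column, preservation of $\alpha^+\preceq\beta^{(t)}$) are order-free. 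Your route is shorter and yields the stronger observation that Theorem~\ref{thm:real} itself holds with positive lexicographic order relaxed to nonincreasing first components, whereas the paper's route avoids re-auditing an earlier proof and, as a byproduct, exhibits exactly how the inequalities transform under in-degree swaps, which speaks directly to the surrounding discussion of which orderings suffice for the Fulkerson--Chen test. One small point common to both arguments: the case $k=1$ of~(\ref{eqn:sum conditions}) is actually needed (to obtain $\alpha^+\preceq\beta^{(0)}$ in your version, and to invoke Theorem~\ref{thm:real} in the paper's), so the hypothesis should be read as $1\le k<n$.
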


\begin{proof}
If $\alpha$ is in positive lexicographic order, then this is true by
Theorem~\ref{thm:real}. Otherwise, let $l$ be an index so that
$\alpha_l^+ = \alpha_{l+1}^+$ and $\alpha_l^- < \alpha_{l+1}^-$. Form
the integer pair sequence $\beta$ from $\alpha$ by exchanging $\alpha_l^-$ and
$\alpha_{l+1}^-$. We show that $\alpha$ satisfies all the inequalities if
and only if $\beta$ satisfies all the inequalities. 

From $\alpha^-$, form the matrix $A$ as in
Corollary~\ref{cor:construct} and let $s_i$ be the row sums in
$A$. From $\beta^-$, form the matrix $B$ and let $s_i'$ be the row
sums in $B$. Notice 
\[
\sum_{i=1}^k\min(\alpha_i^-,k-1)+\sum_{i=k+1}^n\min(\alpha_i^-,k) =
\sum_{i=1}^k s_i
\]
and a similar equality holds for the sums $\sum_{i=1}^ks_i'$.

Notice that $A$ and $B$ differ only in the columns $l$ and
$l+1$. Consider the entries in columns $l$ and $l+1$. We have $a_{i,l}
= b_{i,l+1}$ and $a_{i,l+1} = b_{i,l}$ for every $i \notin\{ l,
l+1\}$; therefore, the row sums are equal except at these two
indices. If $a_{l,l+1} = a_{l+1,l}$, then $s_l = s'_l$ and $s_{l+1} =
s'_{l+1}$; therefore, since $s$ and $s'$ are the same sequence, we
have that $\sum_{i=1}^k s_i \ge \sum_{i=1}^k \alpha_i^+$ if and only
if $\sum_{i=1}^k s_i' \ge \sum_{i=1}^k \alpha_i^+$. In general, we
wish to show that 
$\sum_{i=1}^k s_i \ge \sum_{i=1}^k \alpha_i^+$ for all $k$ if and only if
$\sum_{i=1}^k s_i' \ge \sum_{i=1}^k \alpha_i^+$ for all $k$.

Since $\alpha_l^- < \alpha_{l+1}^-$ it remains only to consider the
case where $a_{l,l+1} = 1$ and $a_{l+1,l} = 0$. In this case, the
construction of $A$ gives that $s_l > s_{l+1}$. We also have that
$s_l' = s_l-1$ and $s_{l+1}' = s_{l+1} + 1$, thus $\sum_{i=1}^k s_i =
\sum_{i=1}^k s_i'$ for every $k \not= l$ and $\sum_{i=1}^l s_i =
\sum_{i=1}^l s_i' +1$. Therefore, for $k < l$ or $k > l+1$, we have
that $\sum_{i=1}^k s_i \ge \sum_{i=1}^k \alpha_i^+$ if and only if
$\sum_{i=1}^k s_i' \ge \sum_{i=1}^k \alpha_i^+$. 

Since the sequences fail the conditions \ref{eqn:sum conditions} with
$k < l$ at the same time, and one failed condition is enough to not
pass this graphicality test, we assume that
$\sum_{i=1}^k s_i = \sum_{i=1}^k s_i' \ge \sum_{i=1}^k \alpha_i^+$ for
$k < l$.  The only way to have exactly one of the conditions
\ref{eqn:sum conditions} fail at $k=l$ is if $\sum_{i=1}^l s_i' <
\sum_{i=1}^l \alpha_i^+$ and $\sum_{i=1}^l s_i \ge \sum_{i=1}^l
\alpha_i^+$. Thus, $\sum_{i=1}^l s_i = \sum_{i=1}^l \alpha_i^+$
and $s_l \le \alpha_l^+$. Both $\alpha$ and $\beta$ fail at least one
condition since
$$\alpha_{l+1}^+ = \alpha_l^+  \ge s_l > s_{l+1}$$ 
implies that $\sum_{i=1}^{l+1} \alpha_i^+ > \sum_{i=1}^{l+1} s_i$.
\end{proof}

This section is only a brief overview of some of the applications of
threshold digraphs. The uses of threshold graphs in various
disciplines has been studied extensively, as shown in Mahadev and
Peled's text~\cite{Mahadev1995}. Our paper is only a starting point of
such a study for threshold digraphs.

\section*{Acknowledgements} We would like to thank Yi-Kai Liu
and Rene Peralta for their comments on our manuscript.

\bibliographystyle{plain}
\bibliography{threshold}

\end{document}